\documentclass[11pt]{amsart}

\usepackage[margin=1.05in]{geometry}
\usepackage{amsmath,amssymb,amsthm,mathtools}
\usepackage{microtype}
\usepackage{hyperref}
\usepackage[nameinlink,capitalise]{cleveref}

\numberwithin{equation}{section}

\hypersetup{colorlinks=true,linkcolor=blue,citecolor=blue,urlcolor=blue}

\newtheorem{theorem}{Theorem}[section]
\newtheorem{proposition}[theorem]{Proposition}
\newtheorem{lemma}[theorem]{Lemma}
\theoremstyle{definition}

\newtheorem{corollary}[theorem]{Corollary}

\DeclareMathOperator{\tr}{tr}
\DeclareMathOperator{\osc}{osc}
\DeclareMathOperator{\Lip}{Lip}
\DeclareMathOperator{\diver}{div}
\newcommand{\R}{\mathbb R}
\newcommand{\Tc}{\Theta_c}
\newcommand{\ds}{\delta_*}
\newcommand{\dd}{\,dx}
\title[Gradient estimates with Lipschitz critical phase]{Gradient estimates for the Lagrangian mean curvature equation with Lipschitz critical phase}

\author{Ruosi Chen, Xingchen Zhou and Ruixuan Zhu}

\address{Ruosi Chen, 
Department of Mathematical Sciences, Tsinghua University, Beijing 100084, China}
\email{crs22@mails.tsinghua.edu.cn}

\address{Xingchen Zhou, 
School of Mathematics and Statistics, Hainan University, Haikou, 570228, PR China.}
\email{zxc3zxc4zxc5@stu.xjtu.edu.cn}

\address{Ruixuan Zhu, 
Institute for Theoretical Sciences, Westlake University, Hangzhou, 310030, China.}
\email{zhuruixuan@westlake.edu.cn}

\date{}

\begin{document}
\begin{abstract}
Let $n\ge3$. We prove a priori interior gradient estimate for the Lagrangian mean curvature equation
\[
 \sum_{i=1}^n\arctan\lambda_i(D^2u)=\theta(x)
\]
under the critical and supercritical phase condition
$\theta\ge (n-2)\pi/2$, assuming only that the Lipschitz norm of the prescribed phase is bounded. 
\end{abstract}
\maketitle

\section{Introduction}
We consider the Lagrangian mean curvature equation
\begin{equation}\label{eq:phase}
\sum_{i=1}^n\arctan\lambda_i(D^2u)=\theta(x)
\end{equation}
in a domain in $\R^n$, where $\lambda_i(D^2u)$ are the eigenvalues of the Hessian matrix. We write $B_r=B_r(0)$ and denote the positive critical phase by $\Tc=(n-2)\pi/2$. We seek an interior gradient bound in terms of the oscillation of $u$ and the Lipschitz seminorm of $\theta$, allowing $\theta$ to attain $\Tc$.

When $\theta$ is constant, \eqref{eq:phase} is the special Lagrangian equation. Its level sets are convex at critical and supercritical phases \cite[Lemma 2.1]{Yuan}. Wang--Yuan \cite{WY} proved interior Hessian estimates in this range for all $n\ge3$. Shankar \cite{Shankar} obtained another proof by a doubling argument.

For prescribed phase $\theta(x)$, Bhattacharya--Mooney--Shankar \cite{BMS} proved interior gradient estimates when $\theta\in C^2$ and $\theta\ge\Tc$. Their proof also applies to certain $C^1$ phases satisfying a first order differential inequality. Bhattacharya--Shankar--Wall--Yepez \cite[Theorem 1.2]{BSWY} considered phases depending also on $u$ and $Du$. Their assumptions include uniform $C^2$ bounds in $x$ and $|D_x\theta|\le C(\theta-\Tc)^{1/2}$. These assumptions do not cover a general Lipschitz prescribed phase attaining $\Tc$. We establish an interior gradient estimate with no additional condition on $D\theta$ near the critical phase.

\begin{theorem}\label{thm:main}
Let $n\ge3$ and let $u$ be a smooth solution of \eqref{eq:phase} in $B_4$. Assume that $\theta$ is Lipschitz and
\[
\Tc\le\theta(x)<\frac{n\pi}{2}\quad\text{in }B_4.
\]
If $M:=\osc_{B_4}u<\infty$ and $K:=\Lip_{B_4}\theta<\infty$, then
\begin{equation}\label{eq:main}
\sup_{B_1}|Du|\le C(n,M,K).
\end{equation}
The constant is independent of higher derivatives of $\theta$.
\end{theorem}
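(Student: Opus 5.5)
We will follow the Bernstein-type strategy of Bhattacharya--Mooney--Shankar \cite{BMS}. The key change is that the test function is built so that only first derivatives of $\theta$ appear with a favorable structure, and $D^2\theta$ is removed by integrating by parts or by approximation. By mollification we may assume $\theta$ smooth; all constants will depend only on $n$, $M=\osc u$ and $K=\Lip\theta$, so the estimate passes to the limit. Normalize $\min_{B_4}u=0$, so $0\le u\le M$.

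\textbf{Step 1: operator and first-order structure.} Let $g=I+(D^2u)^2$ be the induced metric and $L=g^{ij}\partial_{ij}$ the linearized operator. Differentiating \eqref{eq:phase} gives $g^{ij}u_{ijk}=\theta_k$. Hence
\[
L(|Du|^2/2)=g^{ij}u_{ki}u_{kj}+u_k\theta_k\ge g^{ij}u_{ki}u_{kj}-K|Du|.
\]
No $D^2\theta$ appears here, which is why we work with $w=|Du|$ or $\log|Du|$ rather than with the Hessian.

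\textbf{Step 2: use of the critical phase.} For $\theta\ge\Tc$, the Yuan-type convexity \cite[Lemma 2.1]{Yuan} gives $\lambda_{\min}>-C$ on the relevant sets. More precisely, $\lambda_1+\dots+\lambda_n\ge0$ in a quantitative form: at most one eigenvalue is negative, and $|\lambda_{\min}|\le\lambda_i$ for the others. We use this in two ways. First, $u$ is semiconvex along directions with controlled phase. Second, we apply it to the Jacobi-type inequality for $b=\log|Du|$ at points where $|Du|$ is large. Rotate coordinates so that $Du=|Du|e_1$. The identity $\sum_i g^{ii}u_{1i}^2\ge$ (a term comparable to $|D b|_g^2$) is the standard first-order gain. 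The bad term $u_k\theta_k/|Du|^2$ is bounded by $K/|Du|$, which is small where $|Du|$ is large. We therefore aim for
\[
Lb\ge c(n)|\nabla b|_g^2-\frac{K}{|Du|}\quad\text{wherever } |Du|\ge C_0.
\]

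\textbf{Step 3: maximum principle with cutoff.} Consider $\phi=\eta(x)\,b+\beta u$, or following \cite{BMS}, $\phi=\log|Du|+\alpha u^2/2+\log\eta$ with $\eta=(16-|x|^2)$ or a power of it. At an interior maximum point $x_0$ we have $\nabla\phi=0$ and $L\phi\le0$. The term $\alpha\,g^{ij}u_iu_j+\alpha u\,Lu$ must produce a positive multiple of $|Du|^2 g^{11}$. Here $Lu=g^{ij}u_{ij}=\sum\lambda_i/(1+\lambda_i^2)$ is not signed, but its negative part is bounded thanks to the semiconvexity from Step 2. Using $u_{11}=-|Du|(\alpha uu_1+\eta_1/\eta)$ at $x_0$, we get $g^{11}\approx1$ when $|Du|$ is large relative to $\alpha M$. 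Combining this with the cutoff terms and the $K/|Du|$ error gives $|Du|(x_0)\le C(n,M,K)$, and hence \eqref{eq:main}.

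\textbf{Main obstacle.} The hard step is Step 2 near $\theta=\Tc$. There the negative eigenvalue may be huge, and the Jacobi inequality in \cite{BMS} needed $D^2\theta$, or $|D\theta|\le C(\theta-\Tc)^{1/2}$, to absorb cross terms $\theta_k u_{kij}$-type contributions arising from third derivatives. We would first try to avoid Hessian-based quantities entirely, working only with $\log|Du|$ as above, so that $\theta$ enters only through $u_k\theta_k$.

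If the convexity at the critical phase fails to give $g^{11}\gtrsim1$, we have a fallback. We use $\theta\ge\Tc$ to show that the direction $Du/|Du|$ cannot be the eigendirection of the very negative eigenvalue at the maximum point. The gradient condition forces $u_{11}$ to be negative and of size $\alpha M|Du|$, while the phase constraint bounds the negative eigenvalue only in terms of the others. We would then choose $\alpha$ large to derive a contradiction.
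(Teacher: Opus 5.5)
\textbf{There is a genuine gap.} The pointwise Bernstein scheme has no mechanism for the critical endpoint, and both of its key claims fail there.

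\textbf{Step 2.} The Jacobi inequality you aim for is false, even for constant critical phase. With $b=\log|Du|$ and $g^{ij}u_{kij}=\theta_k$ one has exactly
\[
Lb=\frac{\tr\bigl(g^{-1}(D^2u)^2\bigr)+Du\cdot D\theta}{|Du|^2}-2|\nabla b|_g^2
\ \ge\ -|\nabla b|_g^2+\frac{Du\cdot D\theta}{|Du|^2}.
\]
This cannot be improved to a positive multiple of $|\nabla b|_g^2$. For $n=3$, the quadratic $u=\tfrac12(Nx_1^2+N^{-1}x_2^2)+sx_1$ has phase $\arctan N+\arctan N^{-1}=\pi/2=\Tc$ and $|Du(0)|=s$ as large as you like. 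Yet $Lb(0)=\frac{1-N^2}{(1+N^2)s^2}<0$, while $|\nabla b|_g^2(0)=\frac{N^2}{(1+N^2)s^2}$ for $N>1$.

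More fundamentally, the entire good term $\tr(g^{-1}(D^2u)^2)=\sum_i\lambda_i^2/(1+\lambda_i^2)$ is bounded by $n$, while the phase error $u_k\theta_k$ can be as large as $K|Du|$. So ``small where $|Du|$ is large'' is the wrong comparison. Also, $\lambda_{\min}>-C$ is not available at $\theta=\Tc$. The bound is $\lambda_{\min}\ge-\cot(\theta-\Tc)$, which degenerates as $\theta\downarrow\Tc$.

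\textbf{Step 3.} This fails for the same reason. At the maximum, the first-order condition forces $u_{11}=-|Du|(\alpha u|Du|+\eta_1/\eta)\approx-\alpha u|Du|^2$ in the direction of $Du$. So $g^{11}$ is not close to $1$; it can be of order $(\alpha u|Du|^2)^{-2}$. Then $\alpha g^{ij}u_iu_j$ and every other favorable term are $O(|Du|^{-2})$, while for a merely Lipschitz phase $Du\cdot D\theta/|Du|^2$ can be $-K/|Du|$.

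The constraint $\theta\ge\Tc$ does not exclude this configuration; it only gives $|\lambda_n|\le\lambda_{n-1}$. For $n=3$, the eigenvalues $\bigl(2N,2N,\frac{1-4N^2}{4N}\bigr)$ have phase exactly $\pi/2$, and $Du$ may point along the negative eigendirection. So your fallback produces no contradiction. What Step 3 actually proves is the semiconvex case $\theta\ge\Tc+\delta$, with constants blowing up as $\delta\to0$; in the paper this is the easy case.

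\textbf{The missing idea: divergence structure plus integral estimates.}
\begin{enumerate}
\item Write the equation as $S-aR=0$, with $R+iS=i^{2-n}\det(I+iD^2u)$ and $a=\tan(\theta-\Tc)$.
\item Newton tensors of a Hessian are divergence-free. This gives the exact identity $\diver(F\,Du_k)=Q^{ik}a_i$, with $F=V\sec(\theta-\Tc)\,g^{-1}$ and $Q=S_A>0$.
\item Hence $\diver(F\,Dp)\ge-CK\tr Q$ for $p=(1+|Du|^2)^{1/2}$, using only first derivatives of $\theta$. Here $\tr Q$ involves only $\sigma_j$ with $j\le n-2$.
\item The error is then controlled in integral form rather than pointwise. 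Trudinger--Wang Hessian-measure bounds for the $(n-1)$-convex $u$ control $\int p^q\sigma_\ell$ for $\ell\le n-2$.
\item The leftover $a\,\sigma_{n-1}$ term is handled by the local semiconvexity bound $(\theta-\Tc)|Du|\le C$, on balls of radius comparable to $(\theta-\Tc)/K$, together with $\sigma_{n-1}\le aR+\sum_{j\le n-3}\sigma_j$.
\item Moser iteration with the Michael--Simon Sobolev inequality on the graph $(x,Du(x))$, where $|H|^2=g^{ij}\theta_i\theta_j$, supplies the coercivity that is absent pointwise.
\end{enumerate}
Finally, the rescaling $v(y)=r_0^{-2}u(x_0+r_0y)$ with $r_0\sim\delta/(1+K)$ reduces the theorem to phases within $\delta$ of $\Tc$.
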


Combining Theorem~\ref{thm:main} with the Hessian estimate of Ding \cite[Theorem 1.1]{Ding}, we obtain the following consequence.

\begin{corollary}\label{cor:hessian}
Under the assumptions and notation of Theorem~\ref{thm:main},
\[
 \|D^2u\|_{L^{\infty}(B_{1/2})}\le C(n,M,K).
\]
\end{corollary}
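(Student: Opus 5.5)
The corollary is a direct combination of \cref{thm:main} with Ding's interior Hessian estimate \cite[Theorem 1.1]{Ding}. I take Ding's result in the following form. If $v$ is a smooth solution of \eqref{eq:phase} in a ball $B_{2r}$ with phase $\theta\ge\Tc$ and $\theta$ Lipschitz, then
\[
\|D^2v\|_{L^\infty(B_r)}\le C\bigl(n,r,\|Dv\|_{L^\infty(B_{2r})},\Lip\theta\bigr).
\]
The only task is to arrange the balls so that the gradient bound from \cref{thm:main} is available on the domain where Ding's estimate is applied.

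\textbf{Step 1.} \cref{thm:main} gives $\sup_{B_1}|Du|\le C_0(n,M,K)$. Ding's estimate needs a gradient bound on a ball strictly larger than $B_{1/2}$, and $B_1$ is such a ball.

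\textbf{Step 2.} I apply Ding's theorem on $B_1$ to get a bound on $B_{1/2}$. The needed inputs all hold on $B_1\subset B_4$: the phase satisfies $\Tc\le\theta<n\pi/2$, $\Lip_{B_1}\theta\le K$, and $\|Du\|_{L^\infty(B_1)}\le C_0$. If Ding's theorem is stated only on a fixed pair of balls, say $B_1\to B_{1/2}$ or $B_2\to B_1$, I would use the rescaling $v(y)=\rho^{-2}u(\rho y)$. It preserves the equation and changes the phase to $\theta(\rho y)$, whose Lipschitz constant becomes $\rho K$. It also rescales the gradient as $|Dv|=\rho^{-1}|Du|$. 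Since $\rho$ is a fixed numerical constant, the resulting constants depend only on $n,M,K$.

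\textbf{Main obstacle.} The main point to check is that the hypotheses of Ding's theorem match. In particular, I need to confirm that Ding allows the phase to attain $\Tc$ and needs only Lipschitz (or $C^{1}$) regularity of $\theta$, rather than, say, $C^{1,1}$ bounds. If Ding's estimate depends on $\|u\|_{L^\infty}$ rather than the gradient, this is harmless: after subtracting a constant from $u$ we may assume $\|u\|_{L^\infty(B_4)}\le M$. Finally, $\theta$ need only be Lipschitz, while Ding may assume smoothness; in that case I would run the a priori argument for smooth data, noting that the constant depends only on $\Lip\theta$. This is consistent with the remark after \cref{thm:main} that the constant is independent of higher derivatives of $\theta$.
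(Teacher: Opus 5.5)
Your proposal is correct and is the same argument the paper uses. The paper obtains the corollary simply by combining the gradient bound of Theorem~\ref{thm:main} with Ding's interior Hessian estimate \cite[Theorem 1.1]{Ding}, with translation and rescaling as you describe, and it relies on exactly the point you flag: that Ding's estimate allows a merely Lipschitz phase with $\theta\ge\Tc$.
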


These estimates also yield interior regularity for continuous viscosity solutions. We use an approximation argument as in Zhou \cite[Section 3]{Zhou}, solving smooth, strictly supercritical equations with a common boundary value on an interior ball. At the critical endpoint, we identify the limit by testing against small quadratic perturbations of the resulting classical solution. Thus the argument does not require a comparison principle between arbitrary viscosity solutions at the critical phase. The details are given in Section~4.

\begin{corollary}\label{cor:regularity}
Let $n\ge3$ and let $u\in C^0(B_4)$ be a viscosity solution of \eqref{eq:phase}, where $\theta$ is locally Lipschitz and $\Tc\le\theta<n\pi/2$ in $B_4$. Then $u\in C^{2,\alpha}_{\mathrm{loc}}(B_4)$ for every $\alpha\in(0,1)$.
\end{corollary}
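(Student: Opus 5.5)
We follow Zhou's approximation scheme \cite[Section 3]{Zhou}, combined with the a priori estimates of Theorem~\ref{thm:main} and Corollary~\ref{cor:hessian}.

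\emph{Step 1: approximating problems.} Fix a ball $B_r(x_0)\Subset B_4$; it is enough to prove $C^{2,\alpha}$ regularity on $B_{r/8}(x_0)$. After rescaling we may take $B_r(x_0)=B_4$ and assume $\theta$ is Lipschitz on $\overline{B_4}$. Mollify $\theta$ and add a small constant to get smooth phases
\[
\theta_k=\min\{\theta*\rho_{1/k}+1/k,\ n\pi/2-\epsilon_0\}.
\]
Each $\theta_k$ is strictly supercritical and $\theta_k\to\theta$ uniformly. Since $\Lip\theta_k\le K$ uniformly, the constants in Theorem~\ref{thm:main} and Corollary~\ref{cor:hessian} will not depend on $k$. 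Mollify $u$ as well, obtaining $\varphi_k\to u$ uniformly on $\partial B_4$. In the strictly supercritical range the operator is concave on the relevant (convex) level sets, so the Dirichlet problem $F(D^2u_k)=\theta_k$ in $B_4$, $u_k=\varphi_k$ on $\partial B_4$, has a unique smooth solution. This follows from Collins--Picard--Wu or Bhattacharya's Dirichlet theory for supercritical phase, which requires $\theta_k>\Tc$. The maximum principle gives a uniform bound on $\osc u_k$.

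\emph{Step 2: uniform estimates and limit.} Theorem~\ref{thm:main} and Corollary~\ref{cor:hessian}, applied on interior balls and rescaled, give $\|u_k\|_{C^{1,1}(B_2)}\le C$ with $C$ independent of $k$. With $D^2u_k$ bounded, the equation is uniformly elliptic. Its level sets are convex for $\theta_k\ge\Tc$, so Evans--Krylov (in the version for concave level sets, e.g.\ Caffarelli--Yuan) together with the Lipschitz phase gives uniform $C^{2,\alpha}(B_1)$ bounds for every $\alpha<1$. We need barrier control near $\partial B_4$ for equicontinuity up to the boundary. We obtain it by comparison with $\varphi_k$ plus or minus quadratic barriers, using the modulus of continuity of $u$, as in \cite{Zhou}. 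A subsequence then converges locally uniformly to some $v\in C^0(\overline{B_4})\cap C^{2,\alpha}_{\mathrm{loc}}$ with $v=u$ on $\partial B_4$, and $v$ solves \eqref{eq:phase} classically inside.

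\emph{Step 3: identifying $v=u$.} This is the main obstacle, since no comparison principle is available for arbitrary viscosity solutions at the critical phase. Following the introduction, we compare $u$ with the perturbations $v_\delta^\pm=v\pm\delta(|x|^2-16)$. Because $v$ is $C^2$ with bounded Hessian on compact subsets, $v_\delta^+$ is a strict classical subsolution there: adding $2\delta I$ increases each $\arctan\lambda_i$ by at least $c(\delta)>0$. The boundary layer has to be handled by using the interior estimates on a slightly smaller ball $B_{4-\eta}$ and sending $\eta\to0$ via uniform continuity. Suppose $u-v_\delta^+$ had a negative interior minimum, i.e.\ $u<v_\delta^+$ somewhere. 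Then $v_\delta^+$ plus a constant touches $u$ from below at an interior point. The viscosity supersolution property of $u$ gives $F(D^2v_\delta^+)\le\theta$ there, which contradicts strict subsolvability. So $u\ge v_\delta^+$ on the ball, and symmetrically $u\le v_\delta^-$. Letting $\delta\to0$ gives $u=v$.

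Since the argument only tests $u$ against $C^2$ functions, the critical phase causes no difficulty. Hence $u=v\in C^{2,\alpha}_{\mathrm{loc}}$.
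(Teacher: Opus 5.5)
Your proposal is correct and follows essentially the same route as the paper: strictly supercritical smooth approximations with uniformly Lipschitz phases, uniform interior bounds from Theorem~\ref{thm:main} and Corollary~\ref{cor:hessian}, Evans--Krylov-type regularity, and identification of the limit by testing the viscosity inequalities of $u$ against $v\pm\delta(|x|^2-16)$. The differences are technical. The paper keeps the exact boundary value $u$ and uses strictly decreasing phases $\theta_j$, so that $v_1\le v_j\le h$ with $h$ harmonic gives a monotone limit that is continuous up to the boundary, with no barriers or subsequences. It also makes the regularity step explicit through the concave operator $-e^{-b\mathcal P}$ and a linear bootstrap; your one-line appeal to Caffarelli--Yuan leaves both the $x$-dependent phase and the bootstrap to every $\alpha<1$ implicit. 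Finally, your boundary-layer step in Step~3 is unnecessary, because the touching argument only uses the pointwise inequality $\mathcal P(D^2v+2\delta I)>\theta$.
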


We give a brief explanation of our approach. When the phase stays a fixed positive distance above $\Tc$, semiconvexity gives the gradient estimate directly. It therefore suffices to treat phases close to $\Tc$.

The main difficulty is to keep the gradient bound uniform as $\theta$ approaches $\Tc$. Near $\Tc$, we write \eqref{eq:phase} in terms of two polynomials in $D^2u$. Their matrix derivatives have zero divergence. Our key observation is that the slope
$p=(1+|Du|^2)^{1/2}$ satisfies an almost Jacobi inequality that involves only $D\theta$.

To apply Moser iteration on the graph $(x,Du(x))$, we need an initial integral bound for the gradient. By $(n-1)$-convexity and the estimates of Trudinger--Wang \cite{TW}, we control the required integrals against Hessian measures of order at most $n-2$. For the remaining term of order $n-1$, we use the fact that its coefficient vanishes at $\theta=\Tc$. A local semiconvexity argument bounds $(\theta-\Tc)|Du|$. We then use the polynomial equation to control this term by graph volume and lower Hessian measures. With the initial integral bound in hand, we apply the Michael--Simon Sobolev inequality \cite{MS} and iterate to obtain the gradient estimate. The restriction $n\ge3$ is used for the Sobolev exponent $2n/(n-2)$.

The paper is organized as follows. In Section~2, we derive the differential inequality for the gradient. In Section~3, we prove the gradient estimate near the critical phase. In Section~4, we prove Theorem~\ref{thm:main} and Corollary~\ref{cor:regularity}.

\section{An almost Jacobi inequality for the gradient}

We first derive the inequality used to estimate the gradient near the critical phase. Let $u\in C^3(B_4)$ solve \eqref{eq:phase}, and let $\varphi=\theta-\Tc$. Assume
\begin{equation}\label{eq:near}
0\le\varphi\le\ds<\frac{\pi}{2}\quad\text{in }B_4,
\end{equation}
where $\ds>0$ is fixed. We use $M=\osc_{B_4}u<\infty$ and $K=\Lip_{B_4}\theta<\infty$. Let $a=\tan\varphi$ and $p=(1+|Du|^2)^{1/2}$. Since $u\in C^3$, the left side of \eqref{eq:phase} is $C^1$. Thus $\theta$ is $C^1$ and $|D\theta|\le K$. In particular, $|Da|\le\sec^2\ds\,K$. We do not differentiate $\theta$ twice.

For a symmetric matrix $A$, define $R$ and $S$ by
\[
R(A)+iS(A)=i^{2-n}\det(I+iA).
\]
Let $\sigma_j(A)$ be the $j$th elementary symmetric polynomial of the eigenvalues of $A$. We set $\sigma_0=1$ and $\sigma_j=0$ for $j<0$ or $j>n$. Expanding the determinant, we have
\[
\begin{aligned}
R&=\sum_{m\ge0}(-1)^{m+1}\sigma_{n-2m},\\
S&=\sum_{m\ge0}(-1)^m\sigma_{n-1-2m}.
\end{aligned}
\]
All sums are finite under this convention. For $A=D^2u$, let $V=\sqrt{\det(I+A^2)}$. By \eqref{eq:phase}, $R=V\cos\varphi>0$ and
\begin{equation}\label{eq:RS}
S=V\sin\varphi=aR.
\end{equation}
Let $F=S_A-aR_A$ and $Q=S_A$, where the subscripts denote matrix derivatives. Thus $F$ is the linearization of $S-aR$ with $a$ held fixed.

We first show that $F$ is positive definite. In coordinates where $A$ is diagonal, denote the diagonal entries of $F$ and $Q$ by $F_i$ and $Q_i$. By differentiating the determinant with respect to $\lambda_i$, we obtain
\[
\partial_{\lambda_i}(R+iS)
=(R+iS)\frac{\lambda_i+i}{1+\lambda_i^2}.
\]
At $A=D^2u$, we have $F_i=(R+aS)/(1+\lambda_i^2)=V\sec\varphi/(1+\lambda_i^2)$. Thus
\begin{equation}\label{eq:conformal}
F^{ij}=V\sec\varphi\,g^{ij},
\end{equation}
where $g=I+A^2$ is the metric of the graph $(x,Du(x))$, and $(g^{ij})=g^{-1}$. In particular, $F$ is positive definite.

We also need positivity of $Q$. This holds throughout the positive branch $\Tc\le\theta<n\pi/2$. To see this, let $\alpha_i=\arctan\lambda_i$ and $\beta_i=\sum_{j\ne i}\alpha_j-\Tc$. Since $\theta\ge\Tc$ and each $\alpha_j<\pi/2$, we have $-\pi/2<-\alpha_i\le\beta_i<\pi/2$. The determinant derivative therefore implies
\[
Q_i=\frac{V}{\sqrt{1+\lambda_i^2}}\cos\beta_i\in(0,V].
\]
Under \eqref{eq:near}, this is also $Q_i=R(1+a\lambda_i)/(1+\lambda_i^2)$. Consequently,
\begin{equation}\label{eq:volume-trace}
\begin{aligned}
0&<\tr Q\le nV,\\
0&<\tr F\le n\sec\ds\,V.
\end{aligned}
\end{equation}

The divergence calculation requires a second expression for $Q$. Let $T_j=\partial\sigma_{j+1}/\partial A$ be the Newton tensors, with $T_0=I$ and $T_j=0$ for $j<0$ or $j\ge n$. The recursion $T_j=\sigma_jI-AT_{j-1}$ implies
\[
RI-R_AA
=\sum_{m\ge0}(-1)^{m+1}T_{n-2m}
=\sum_{m\ge0}(-1)^mT_{n-2-2m}.
\]
Here the term $T_n$ vanishes. Differentiating the expansion of $S$, we obtain
\begin{equation}\label{eq:Qidentity}
Q=RI-R_AA.
\end{equation}
The tensor expansions are
\[
\begin{aligned}
Q&=\sum_{m\ge0}(-1)^mT_{n-2-2m},\\
F&=\sum_{m\ge0}(-1)^m
\bigl(T_{n-2-2m}+aT_{n-1-2m}\bigr).
\end{aligned}
\]
In dimension three, $R=\sigma_1-\sigma_3$, $S=\sigma_2-1$, and $Q=T_1$.

We record trace bounds for use in the integral estimates. By Wang--Yuan \cite[Lemma 2.1]{WY}, the condition $\theta\ge\Tc$ implies $\sigma_j(D^2u)\ge0$ for $1\le j\le n-1$. In other words, $u$ is $(n-1)$-convex. By the same lemma, $\lambda_{n-1}>0$ and $\lambda_{n-1}\ge|\lambda_n|$ when the eigenvalues are ordered decreasingly. These conclusions apply pointwise to a variable phase. Since $\tr T_j=(n-j)\sigma_j$, we have
\[
\begin{aligned}
\tr Q&=\sum_{m\ge0}2(m+1)(-1)^m\sigma_{n-2-2m},\\
\tr F&=\sum_{m\ge0}(-1)^m
\bigl(2(m+1)\sigma_{n-2-2m}
+a(2m+1)\sigma_{n-1-2m}\bigr).
\end{aligned}
\]
The nonnegativity of the $\sigma_j$ and $a\le\tan\ds$ imply
\begin{equation}\label{eq:lowerQ}
\tr Q\le C_n\sum_{j=0}^{n-2}\sigma_j
\end{equation}
and
\begin{equation}\label{eq:lowerF}
\tr F\le C_{n,\ds}\sum_{j=0}^{n-2}\sigma_j+C_na\sigma_{n-1}.
\end{equation}

We use $D$ and $\diver$ for the Euclidean gradient and divergence. Repeated coordinate indices are summed.

\begin{lemma}\label{lem:differential}
Under \eqref{eq:near}, the function $p=(1+|Du|^2)^{1/2}$ satisfies
\begin{equation}\label{eq:differential}
\diver(F Dp)\ge-C_{\ds}K\tr Q.
\end{equation}
\end{lemma}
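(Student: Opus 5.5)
The plan is to differentiate the equation once, use the divergence-free structure of the matrix derivatives $S_A$ and $R_A$, and observe that the only term involving $R\,Da$ cancels exactly. What survives is a nonnegative quadratic term, a Kato-type correction and an error of the form $Da\cdot Q\,Du$, which is controlled by $K\tr Q$. The argument never differentiates $\theta$ twice.

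\emph{Step 1 (first derivative of the equation).} By \eqref{eq:RS}, $S(D^2u)-a(x)R(D^2u)=0$. Differentiating in $x_k$ gives
\[
F^{ij}u_{ijk}=a_kR .
\]
This uses only $a\in C^1$, with $|Da|\le \sec^2\ds\,K$.

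\emph{Step 2 (divergence of $F$).} Since $R$ and $S$ are sums of $\sigma_j$'s, their matrix derivatives are sums of Newton tensors $T_j(D^2u)$. These are divergence free: $\partial_iT_j^{ij}(D^2u)=0$, the classical null-Lagrangian identity. Hence
\[
\partial_iF^{ij}=-a_iR_A^{ij}.
\]

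\emph{Step 3 (divergence of $F\,D(p^2/2)$).} Note $D_j(p^2/2)=u_ku_{kj}$. Then
\[
\partial_i\bigl(F^{ij}u_ku_{kj}\bigr)=\partial_iF^{ij}\,u_ku_{kj}+F^{ij}u_{ki}u_{kj}+u_kF^{ij}u_{kij}.
\]
By Step 1, the last term equals $R\,Da\cdot Du$. By Step 2, the first term is $-a_i(R_AA\,Du)_i$. The identity \eqref{eq:Qidentity} gives $R_AA=RI-Q$, where $A$ and $R_A$ commute because both are functions of the symmetric matrix $A$. So the first term becomes $-R\,Da\cdot Du+Da\cdot Q\,Du$. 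The two $R\,Da\cdot Du$ contributions cancel, leaving
\[
\diver\bigl(F\,D(p^2/2)\bigr)=F^{ij}u_{ki}u_{kj}+Da\cdot Q\,Du .
\]
This cancellation is the crux of the lemma. It works precisely because \eqref{eq:Qidentity} expresses $R_AA$ through $Q$, so no uncontrolled term in $R$ remains. I expect checking the index and transpose conventions here to be the main obstacle.

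\emph{Step 4 (passing from $p^2/2$ to $p$).} Write $Dp=D(p^2/2)/p$. Then
\[
\diver(F\,Dp)=\frac{1}{p}\Bigl(F^{ij}u_{ki}u_{kj}-F^{ij}p_ip_j+Da\cdot Q\,Du\Bigr).
\]
Work in coordinates diagonalizing $D^2u$, where $F$ is diagonal and positive by \eqref{eq:conformal}. In these coordinates $p_i=u_iu_{ii}/p$, so $F^{ii}p_i^2\le F^{ii}u_{ii}^2\le F^{ii}\sum_k u_{ki}^2$, since $|Du|<p$. Thus the first two terms combine to something nonnegative; this is the Kato-type inequality.

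\emph{Step 5 (the error term).} Because $Q$ is positive definite, its operator norm is at most $\tr Q$. Hence
\[
\frac{|Da\cdot Q\,Du|}{p}\le |Da|\,\tr Q\,\frac{|Du|}{p}\le \sec^2\ds\,K\tr Q ,
\]
which gives \eqref{eq:differential} with $C_{\ds}=\sec^2\ds$.
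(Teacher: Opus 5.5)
Your proposal is correct and follows the paper's proof essentially step for step: the same differentiated equation $F^{ij}u_{ijk}=a_kR$, the same use of divergence-free Newton tensors together with \eqref{eq:Qidentity}, the same Kato-type nonnegativity in diagonal coordinates, and the same bound with $C_{\ds}=\sec^2\ds$. The only difference is bookkeeping. You compute $\diver(F\,D(p^2/2))$ directly and observe the cancellation of $R\,Da\cdot Du$ there, whereas the paper first records $\diver(F\,Du_k)=Q^{ik}a_i$ and then applies the chain rule; these are the same computation.
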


\begin{proof}
By differentiating $S(D^2u)-aR(D^2u)=0$, we obtain $F^{ij}u_{ijk}=Ra_k$. The Newton tensors of a Hessian satisfy $\partial_iT_j^{ik}(D^2u)=0$. Hence $S_A$ and $R_A$ are divergence-free, and $\partial_iF^{ij}=-a_iR_A^{ij}$. By \eqref{eq:Qidentity},
\[
\begin{aligned}
\diver(F Du_k)
&=Ra_k-a_iR_A^{ij}u_{jk}\\
&=(RI-R_AD^2u)^{ik}a_i
=Q^{ik}a_i.
\end{aligned}
\]
Using $p_i=u_ku_{ki}/p$, we then have
\[
\diver(F Dp)
=\frac{u_k}{p}\diver(F Du_k)
+\frac{1}{p}F^{ij}u_{ki}u_{kj}
-\frac{1}{p^3}F^{ij}(u_ku_{ki})(u_\ell u_{\ell j}).
\]
Choose a fixed orthogonal coordinate system in which $D^2u$ is diagonal at the point under consideration. Then
\[
\diver(F Dp)
=\frac{QDu\cdot Da}{p}
+\frac{1}{p}\sum_iF_i\lambda_i^2
\left(1-\frac{u_i^2}{p^2}\right).
\]
This calculation uses polynomial matrix derivatives, so repeated eigenvalues cause no difficulty. The sum is nonnegative. Since $Q$ is positive definite,
\[
\frac{|QDu\cdot Da|}{p}
\le |Da|\,\tr Q\,\frac{|Du|}{p}
\le \sec^2\ds\,K\tr Q.
\]
This proves \eqref{eq:differential}.
\end{proof}

\section{The gradient estimate near the critical phase}

We now prove the estimate under \eqref{eq:near}.

\begin{proposition}\label{prop:near}
Fix $0<\ds<\pi/2$. Let $n\ge3$ and let $u\in C^3(B_4)$ solve \eqref{eq:phase}, with $M=\osc_{B_4}u<\infty$ and $K=\Lip_{B_4}\theta<\infty$. If $\Tc\le\theta\le\Tc+\ds$ in $B_4$, then
\[
\sup_{B_1}|Du|\le C(n,M,K,\ds).
\]
\end{proposition}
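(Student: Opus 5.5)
The proof has three parts. We turn Lemma~\ref{lem:differential} into a subsolution inequality for the Laplace--Beltrami operator on the graph $\Sigma=\{(x,Du(x))\}$. We then run Moser iteration with the Michael--Simon Sobolev inequality. The iteration is started from an integral bound for $p$ built from Trudinger--Wang estimates together with a local semiconvexity bound.

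\emph{Step 1 (subsolution on the graph).} By \eqref{eq:conformal}, $F^{ij}=V\sec\varphi\,g^{ij}$. The volume element of $\Sigma$ is $V\,dx$. Hence for a test function $\eta\ge0$,
\[
\int F Dp\cdot D\eta\,dx=\int \sec\varphi\,\langle\nabla p,\nabla\eta\rangle_g\,dv_g .
\]
Since $\tr Q\le nV$ by \eqref{eq:volume-trace}, Lemma~\ref{lem:differential} gives, in the weak sense,
\[
\diver_g(\sec\varphi\,\nabla p)\ge -C_{\ds}K
\]
on $\Sigma$. The weight $\sec\varphi$ lies in $[1,\sec\ds]$, so it is harmless. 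Testing with $\eta^2p^{q}$ gives Caccioppoli inequalities. Combined with the Michael--Simon inequality on $\Sigma$, which has mean curvature $H=$ (graph gradient of $\theta$), bounded by $K$, this yields a mean value inequality:
\[
\sup_{B_1}p\le C\Bigl(1+\int_{B_2}p\,V\,dx\Bigr).
\]
Here $n\ge3$ provides the Sobolev exponent $2n/(n-2)$. Extrinsic balls on $\Sigma$ are handled by cutoffs in $x$, as in Wang--Yuan-type arguments. We use $|Dp|$-type bounds only through $F$. It may be more convenient to work with $\eta(x)$ and use $|\nabla_g\eta|^2=g^{ij}\eta_i\eta_j\le|D\eta|^2$.

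\emph{Step 2 (initial integral bound).} We must bound $\int_{B_2}pV\,dx$, or at least $\int pV$ raised to a small power, in terms of $n,M,K,\ds$. First replace $V$ by $\sec\varphi\,R$, and then replace $\int pV$ by $\int p\,\tr F$ up to constants; this is how the trace bounds \eqref{eq:lowerF} enter. By \eqref{eq:lowerF},
\[
\int_{B_2}p\,V\le C\sum_{j=0}^{n-2}\int_{B_2}p\,\sigma_j+C\int_{B_2}a\,p\,\sigma_{n-1}.
\]
Since $u$ is $(n-1)$-convex, the Trudinger--Wang estimates for $k$-convex functions control $\int_{B_3}|Du|\,\sigma_j(D^2u)$ for $j\le n-2$ by $C(n,M)$. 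This uses integration by parts of the divergence form $\sigma_j=\frac1j\partial_i(T_{j-1}^{ik}u_k)$ and a gradient bound in $L^q$ valid for $(n-1)$-convex functions when $j<n-1$. For the last term we need $a|Du|\le C$ on $B_3$. We get it from local semiconvexity: where $\varphi>0$, the smallest eigenvalue satisfies $\lambda_n\ge-\cot\varphi\ge -C/a$ pointwise. To turn this into a gradient bound we use a localized argument. If $|Du(x_0)|$ is large, semiconvexity with constant $C/a(x_0)$ on a ball of radius $\sim\varphi(x_0)/K$ keeps $a$ comparable, by the Lipschitz bound. This ball has radius $\gtrsim a/K$, and on it oscillation $M$ forces
\[
|Du(x_0)|\lesssim \frac{M}{\text{radius}}+\frac{\text{radius}}{a}\lesssim \frac{MK}{a}+\frac1K ,
\]
so $a|Du|\le C(M,K)$. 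Then $a\,p\,\sigma_{n-1}\le C\sigma_{n-1}$. Finally, $\sigma_{n-1}$ itself is integrable: one integration by parts gives
\[
\int\eta\sigma_{n-1}\le C\int|Du|\,|D\eta|\,\sigma_{n-2},
\]
which is controlled again by Trudinger--Wang.

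\emph{Main obstacle.} I expect Step 2's treatment of the top-order term $a\sigma_{n-1}$ to be the crux, in particular making the semiconvexity bound $(\theta-\Tc)|Du|\le C$ rigorous near points where $\varphi$ degenerates. The fallback is to use the polynomial equation $S=aR$. It expresses $a\sigma_{n-1}$ through $\sigma_{n-2}$, lower $\sigma_j$ and $aR$-terms, so that term can be absorbed into the graph volume via $V=\sec\varphi R$ and the lower Hessian measures, as the introduction indicates. Once the initial bound $\int_{B_2}pV\le C$ holds, Step 1 gives the result.
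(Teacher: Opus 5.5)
There is a genuine gap in Step 2, in its first reduction. You propose to replace $\int pV$ by $\int p\tr F$ ``up to constants'' and then bound it via \eqref{eq:lowerF}. But \eqref{eq:conformal} gives $\tr F=V\sec\varphi\sum_i(1+\lambda_i^2)^{-1}$. So only the inequality $\tr F\le n\sec\ds\,V$ holds, and the reverse fails when all $|\lambda_i|$ are large.

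For example, take $n=3$ and $\theta\equiv\Tc$, with eigenvalues $(L,L,(1-L^2)/(2L))$. Then $V=(1+L^2)^2/(2L)\sim L^3/2$, while $\tr F=2\sigma_1=(1+3L^2)/L\sim 3L$. In general $V\ge|\sigma_n|$ is of order $n$ in $D^2u$, so no combination of Hessian measures of order $\le n-1$ dominates it pointwise. Your displayed bound for $\int_{B_2}pV\dd$ is therefore unjustified, and with it the input to your mean value inequality. Your bound $\int p\tr F\le C$, obtained from $ap\le C$ and $\int\sigma_{n-1}\le C$, is correct, but it is not the quantity the Moser iteration on the graph needs.

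The missing idea is to obtain $V$-weighted integrals from the Michael--Simon inequality rather than pointwise. The graph Dirichlet energy is $\int Vg^{ij}f_if_j\dd=\int\cos\varphi\,F^{ij}f_if_j\dd$, and similarly $|H|^2V\le K^2\tr F$, so only $\tr F$-weighted quantities appear on the right of \eqref{eq:MS}. Applying \eqref{eq:MS} to a cutoff $\eta$ gives the graph-volume bound $\int_{B_{5/2}}V\dd\le\bigl(C(1+K^2)\int_{B_3}\tr F\dd\bigr)^{\gamma}$. Applying it to $\eta p$ gives $\int_{B_{3/2}}p^{2\gamma}V\dd\le C$, provided you control $\int p^2\tr F$ and the energy $\int FDp\cdot Dp$. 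The energy comes from testing \eqref{eq:differential} with $\eta^2p$.

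At this $p^2$ level your trick $ap\le C$ no longer closes the estimate. It leaves $\int p\,\sigma_{n-1}$, which Trudinger--Wang does not control, since their gradient-weighted bounds cover only $\ell\le n-2$. This is where the equation $S=aR$ enters. It gives $\sigma_{n-1}\le aR+\sum_{j\le n-3}\sigma_j$, hence $ap^2\sigma_{n-1}\le(ap)^2R+ap^2\sum_{j\le n-3}\sigma_j$. Here $(ap)^2R\le CV$, which is integrable by the first Michael--Simon application. Your ``fallback'' alludes to this absorption into the graph volume, but it assumes a volume bound that your proposal never establishes.
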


We retain the notation of Section~2 and let $\gamma=n/(n-2)$. The iteration starts from the following integral bound.

\begin{lemma}\label{lem:initial-integral}
Under the assumptions of Proposition~\ref{prop:near},
\begin{equation}\label{eq:initial-integral}
\int_{B_{3/2}}p^{2\gamma}V\dd\le C(n,M,K,\ds).
\end{equation}
\end{lemma}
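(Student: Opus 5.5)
Throughout, $C$ denotes a constant depending only on $n,M,K,\ds$. The plan is to bound $\int_{B_{3/2}} p^{2\gamma}V\dd$ by first controlling $V$ through the trace of $F$ or $Q$, then expanding in elementary symmetric functions, and finally using Trudinger--Wang type Hessian measure estimates for $(n-1)$-convex functions. By \eqref{eq:conformal}, $\tr F=V\sec\varphi\,\tr g^{-1}$, which is not directly comparable to $V$. Instead I use the identity $V=R\sec\varphi$ with $R=\sum_m(-1)^{m+1}\sigma_{n-2m}$, together with the polynomial equation \eqref{eq:RS}. This gives the pointwise bound
\[
V\le C\Bigl(\sum_{j=0}^{n-2}\sigma_j+a\,\sigma_{n-1}\Bigr).
\]
To see this, write $\cos\varphi\, V=R$ and $\sin\varphi\, V=S$, so $V=\cos\varphi\,R+\sin\varphi\,S$. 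Then $S=\sigma_{n-1}-\sigma_{n-3}+\dots$ contributes $\sin\varphi\,\sigma_{n-1}\le a\sigma_{n-1}$ plus lower terms. The $\sigma_n$ term in $R$ is handled by $R\le V$ together with $\lambda_{n-1}\ge|\lambda_n|$, or more simply by writing $V=\sec\varphi\,R$ and substituting $R=S/a$ where $a$ is large. Alternatively, since $V\le\tr Q$ up to constants ($Q_i\ge c\,V/(1+\lambda_i^2)^{1/2}$ gives a lower bound only for the smallest $\lambda_i$), the cleanest route is the trace route: bound $V\le C(\tr Q+a\sigma_{n-1})$ using the tensor expansions. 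So the integral splits into two pieces,
\[
\int p^{2\gamma}\sigma_j\,\eta\dd\quad(j\le n-2),\qquad\int p^{2\gamma}a\sigma_{n-1}\,\eta\dd,
\]
where $\eta$ is a cutoff.

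For the lower-order pieces I invoke the Trudinger--Wang local estimates for $k$-convex functions. Since $u$ is $(n-1)$-convex, every $k\le n-2<n-1$ falls in the range where the gradient of a $k$-convex function is locally in $L^q$ for $q<n(k)$ large enough. There is also a local $L^\infty$ bound via semiconvexity-free estimates of Hessian measures, namely $\int_{B_r}\sigma_j\le C\,\osc u$. The factor $p^{2\gamma}$ is absorbed by integrating by parts using $\sigma_j=\frac1j\partial_i(T_{j-1}^{ik}u_k)$. Each integration by parts moves a derivative onto $p^{2\gamma}\eta$ and lowers the order of $\sigma$ while raising the power of $|Du|$. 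Iterating down to $\sigma_0$ leaves $\int|Du|^q$, which is controlled for $(n-1)$-convex $u$ with $q$ in the Trudinger--Wang range; since $q$ can be taken up to $n(n-1)/(n-(n-1))$-type exponents, finiteness holds. Oscillation terms come from $|u-\bar u|\le M$.

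For the top piece $a\sigma_{n-1}$, I first prove a local semiconvexity bound: at points where $\varphi>0$, the function $u$ is $\lambda_{\min}\ge -C/\tan\varphi$-type semiconvex. Combining this with a maximum principle argument on $\varphi|Du|$ (with the Lipschitz bound on $\theta$) gives $a|Du|\le C$ on $B_2$. Then I use the equation $a\sigma_{n-1}=aR+a(\sigma_{n-3}-\dots)$, with $aR=S\le\dots$, to rewrite $a\sigma_{n-1}p^{2\gamma}$. Alternatively, I integrate by parts once: $a\sigma_{n-1}=\frac{a}{n-1}\partial_i(T_{n-2}^{ik}u_k)$. This produces $a|Du|\,p^{2\gamma}\tr T_{n-2}$ terms and $|Da|\,|Du|\,p^{2\gamma}\sigma_{n-2}$ terms. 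The former is bounded by $C p^{2\gamma}\sigma_{n-2}$ using $a|Du|\le C$, which reduces it to the lower-order case.

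The main obstacle is the $a\sigma_{n-1}$ term, specifically establishing $(\theta-\Tc)|Du|\le C$ locally with constants independent of higher derivatives of $\theta$. I would attempt this by comparing $u$ with quadratic barriers: at phase $\Tc+\varphi$ the eigenvalues satisfy $\lambda_{\min}\ge-\cot\varphi$-type bounds, which gives semiconvexity and hence gradient bounds proportional to $\cot\varphi\cdot M$. Since $\varphi$ varies, I would localize to balls of radius comparable to $\varphi/K$, on which $\varphi$ is comparable to a constant.
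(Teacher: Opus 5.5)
Your proposal has a genuine gap at its first step. The pointwise bound $V\le C\bigl(\sum_{j=0}^{n-2}\sigma_j+a\sigma_{n-1}\bigr)$ is false, and so is $V\le C(\tr Q+a\sigma_{n-1})$. Both fail exactly in the critical regime this lemma is about.

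Take $n=3$, $\theta=\Tc=\pi/2$ (so $a=0$), and eigenvalues $(L,L,(1-L^2)/(2L))$. Then $\sigma_2=1$, so $S=0$, and
\[
V=R=\sigma_1-\sigma_3=\frac{(1+L^2)^2}{2L},\qquad \sigma_0+\sigma_1=1+\frac{3L^2+1}{2L},\qquad \tr Q=2\sigma_1 .
\]
More generally, take $\lambda_1=\dots=\lambda_{n-1}=L$ at the critical phase. Then $V\sim L^n/(n-1)$, while $\sigma_j=O(L^{n-2})$ for $j\le n-2$.

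The obstruction is the term $-\sigma_n$ in $R$. Neither $(n-1)$-convexity nor $\lambda_{n-1}\ge|\lambda_n|$ controls it. Writing $R\le V$ is circular, and $R=S/a$ degenerates as $a\to0$. So the graph volume cannot be dominated pointwise by Hessian measures of order at most $n-1$.

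The paper bounds $\int_{B_{5/2}}V\dd$ only in integrated form. It applies the Michael--Simon inequality \eqref{eq:MS} to a cutoff $\eta$ on the graph. Using $|H|^2=g^{ij}\theta_i\theta_j$ and $Vg^{ij}=\cos\varphi\,F^{ij}$, the right side becomes $C(1+K^2)\int\tr F\dd$, which \eqref{eq:lowerF} and \eqref{eq:masses} control.

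The second gap is the weight $p^{2\gamma}$. With $k=n-1$, Trudinger--Wang give $\int|Du|^q\sigma_\ell\dd$ only for $q<n(n-1-\ell)$. This range is sharp for $(n-1)$-convex functions, as smooth approximations of $|x|^{(n-2)/(n-1)}$ show. For $\ell=n-2$ the condition reads $q<n$, whereas $2\gamma=6$ when $n=3$ and $2\gamma=4$ when $n=4$. For $n=3$ even $\ell=0$ fails, since $6\not<6$. For $n\ge5$ one has $2\gamma<n$, but the volume obstruction above persists in every dimension.

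Your integration by parts cannot beat this range. When the derivative falls on $p^{2\gamma}$, it produces $2\gamma p^{2\gamma-2}\bigl(\sigma_j|Du|^2-Du\cdot T_jDu\bigr)$. These terms have the same order $j$ and the same power of $p$, so nothing is lowered. For $j=n-1$, $T_{n-1}$ is the cofactor matrix of $D^2u$, which is not controlled. Likewise, using the equation on the top piece yields $(ap)^2p^{2\gamma-2}R$. That requires $\int p^{2\gamma-2}V\dd$, which is again unavailable.

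The missing idea is that the exponent $2\gamma$ must come from the Sobolev inequality on the graph, not from Hessian-measure estimates. The paper proceeds as follows:
\begin{itemize}
\item It uses Trudinger--Wang only with $q=1,2$.
\item It proves $\int_{B_{5/2}}p^2\tr F\dd\le C$. Your bound $ap\le C$, which you derive correctly by localized semiconvexity, enters here through $ap^2\sigma_{n-1}\le(ap)^2R+ap^2\sum_{j\le n-3}\sigma_j$, together with $R\le V$ and the volume bound.
\item It obtains $\int_{B_2}(FDp)\cdot Dp\dd\le C$ by testing Lemma~\ref{lem:differential} with $\eta^2p$.
\item It applies \eqref{eq:MS} to $\eta p$, with \eqref{eq:conformal} turning $|\nabla_g(\eta p)|^2V$ into $F$-energy.
\end{itemize}
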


\begin{proof}
We first bound the integrals of the Hessian measures. Subtracting $\sup_{B_4}u$, we may assume $-M\le u\le0$. By $(n-1)$-convexity and Trudinger--Wang \cite[Theorem 3.1]{TW}, we obtain
\begin{equation}\label{eq:masses}
\int_{B_3}\sigma_\ell(D^2u)\dd\le C(n,M),
\qquad 0\le\ell\le n-1.
\end{equation}
The case $\ell=0$ is immediate. For the gradient weights, we apply \cite[Theorem 4.1]{TW} with $k=n-1$ to obtain
\[
\int_{B_3}|Du|^q\sigma_\ell(D^2u)\dd
\le C_{n,\ell,q}
\left(\int_{B_4}|u|\dd\right)^{q+\ell},
\]
provided $0\le\ell\le n-2$ and $0\le q<n(n-1-\ell)$. The upper endpoint is at least $n>2$, so $q=1,2$ are admissible. Together with \eqref{eq:masses}, this proves
\begin{equation}\label{eq:weighted}
\int_{B_3}p^q\sigma_\ell(D^2u)\dd\le C(n,M),
\qquad 0\le\ell\le n-2,\quad q=1,2.
\end{equation}

We next bound the graph volume. The graph
\[
\Gamma_u=\{(x,Du(x)):x\in B_4\}\subset\R^{2n}
\]
has metric $g_{ij}=\delta_{ij}+u_{ik}u_{jk}$ and volume form $d\mu=V\dd$. We take $H$ to be the trace of the second fundamental form and let $J(x,y)=(-y,x)$. The mean curvature identity is $H=J\nabla_g\theta$; see \cite[Section 2.1]{Ding}. Thus $|H|^2=g^{ij}\theta_i\theta_j$. For $f\in C_c^1(\Gamma_u)$, the Michael--Simon Sobolev inequality \cite{MS} takes the form
\begin{equation}\label{eq:MS}
\left(\int_{\Gamma_u}|f|^{2\gamma}\,d\mu\right)^{1/\gamma}
\le C_n\int_{\Gamma_u}
\bigl(|\nabla_gf|^2+|H|^2f^2\bigr)\,d\mu.
\end{equation}
To obtain \eqref{eq:MS} from the $L^1$ form, apply it to $|f|^{2(n-1)/(n-2)}$ and use Cauchy's inequality. The graph is $C^2$. A cutoff supported in a compact subset of $B_4$ has compact support on $\Gamma_u$. Its support has finite graph volume because $D^2u$ is continuous.

Choose $0\le\eta\le1$ in $C_c^1(B_3)$, equal to one on $B_{5/2}$, with $|D\eta|\le C$. By \eqref{eq:MS} and $Vg^{ij}=\cos\varphi\,F^{ij}$,
\[
\begin{aligned}
\left(\int_{B_3}\eta^{2\gamma}V\dd\right)^{1/\gamma}
&\le C_n\int_{B_3}
\bigl(Vg^{ij}\eta_i\eta_j+\eta^2Vg^{ij}\theta_i\theta_j\bigr)\dd\\
&\le C_n(1+K^2)\int_{B_3}\tr F\dd
\le C(n,M,K,\ds).
\end{aligned}
\]
For the last inequality we used \eqref{eq:lowerF}, \eqref{eq:masses}, and $a\le\tan\ds$. Therefore
\begin{equation}\label{eq:volume}
\int_{B_{5/2}}V\dd\le C(n,M,K,\ds).
\end{equation}

The term $ap^2\sigma_{n-1}$ is the only part of $p^2\tr F$ not covered by \eqref{eq:weighted}. To bound its integral, we first bound $ap$. We use an elementary semiconvexity estimate that will also enter the proof of Theorem~\ref{thm:main}. Let $0<r\le1$ and $w\in C^2(B_r(x_0))$. If $D^2w\ge-LI$, $L\ge0$, and $\osc_{B_r(x_0)}w\le M$, then
\begin{equation}\label{eq:semiconvex-gradient}
|Dw(x_0)|\le C_n\left(\frac Mr+Lr\right).
\end{equation}
Indeed, $w(x)+L|x-x_0|^2/2$ is convex. Comparing its value at $x_0$ with its values at $x_0\pm(r/2)e$, for each unit vector $e$, proves \eqref{eq:semiconvex-gradient}.

Fix $x\in B_{5/2}$ and let $\delta=\varphi(x)$. When $\delta=0$, we have $a(x)p(x)=0$. For $\delta>0$, choose
\[
\rho=\min\left\{\frac14,\frac{\delta}{2K}\right\},
\]
where the second entry is $+\infty$ if $K=0$. On $B_\rho(x)\subset B_3$, the Lipschitz bound implies $\varphi\ge\delta/2$. For every eigenvalue of $D^2u$ on this ball,
\[
\arctan\lambda_i
=\Tc+\varphi-\sum_{j\ne i}\arctan\lambda_j
>-\frac{\pi}{2}+\frac{\delta}{2}.
\]
Hence $D^2u\ge-\cot(\delta/2)I$ there. By \eqref{eq:semiconvex-gradient},
\[
\delta|Du(x)|
\le C_n\left(\frac{M\delta}{\rho}
+\delta\rho\cot(\delta/2)\right)
\le C(n,M,K,\ds).
\]
Here $\delta/\rho=\max\{4\delta,2K\}\le4\ds+2K$ and $\delta\cot(\delta/2)\le2$. These bounds also cover $K=0$, when $\rho=1/4$. Since $\tan\delta\le C_{\ds}\delta$ and $p\le1+|Du|$, we conclude that
\begin{equation}\label{eq:product}
ap\le C(n,M,K,\ds)\quad\text{in }B_{5/2}.
\end{equation}

We can now prove the weighted trace estimate
\begin{equation}\label{eq:weighted-trace}
\int_{B_{5/2}}p^2\tr F\dd\le C(n,M,K,\ds).
\end{equation}
By \eqref{eq:lowerF} and \eqref{eq:weighted}, only the integral of $ap^2\sigma_{n-1}$ remains to be bounded. The expansion of $S$ and \eqref{eq:RS} imply
\[
\sigma_{n-1}
=aR+\sigma_{n-3}-\sigma_{n-5}+\cdots
\le aR+\sum_{j=0}^{n-3}\sigma_j.
\]
Consequently,
\[
ap^2\sigma_{n-1}
\le (ap)^2R+ap^2\sum_{j=0}^{n-3}\sigma_j.
\]
The integral of the first term is bounded by \eqref{eq:product}, $R\le V$, and \eqref{eq:volume}. The remaining terms are bounded by \eqref{eq:weighted} and $a\le\tan\ds$. This proves \eqref{eq:weighted-trace}.

Choose a cutoff $\eta\in C_c^1(B_{5/2})$, equal to one on $B_2$, with $0\le\eta\le1$ and $|D\eta|\le C$. Testing \eqref{eq:differential} with $\eta^2p$ and applying Cauchy's inequality, we obtain
\[
\int_{B_{5/2}}\eta^2(F Dp)\cdot Dp\dd
\le C_{\ds}K\int_{B_{5/2}}\eta^2p\tr Q\dd
+C\int_{B_{5/2}}p^2(F D\eta)\cdot D\eta\dd.
\]
The first integral on the right is bounded by \eqref{eq:lowerQ} and \eqref{eq:weighted}. By \eqref{eq:weighted-trace}, the cutoff term is also bounded. Thus
\[
\int_{B_2}(F Dp)\cdot Dp\dd\le C(n,M,K,\ds).
\]

Finally, take a cutoff $\eta\in C_c^1(B_2)$, equal to one on $B_{3/2}$, and apply \eqref{eq:MS} to $f=\eta p$. By \eqref{eq:conformal}, the Dirichlet integral is bounded by the preceding energy estimate and \eqref{eq:weighted-trace}. The mean curvature term satisfies
\[
\int_{\Gamma_u}\eta^2p^2|H|^2\,d\mu
=\int_{B_2}\eta^2p^2\cos\varphi\,F^{ij}\theta_i\theta_j\dd
\le K^2\int_{B_2}\eta^2p^2\tr F\dd.
\]
It is therefore bounded by \eqref{eq:weighted-trace}. We obtain
\[
\left(\int_{B_{3/2}}p^{2\gamma}V\dd\right)^{1/\gamma}
\le C(n,M,K,\ds),
\]
which proves \eqref{eq:initial-integral}.
\end{proof}

\begin{proof}[Proof of Proposition~\ref{prop:near}]
Let $1\le r<s\le3/2$ and $q\ge2$. Choose $0\le\eta\le1$ in $C_c^1(B_s)$, equal to one on $B_r$, with $|D\eta|\le C/(s-r)$. Testing \eqref{eq:differential} with $\eta^2p^{q-1}$, we have
\[
\begin{aligned}
(q-1)\int_{B_s}\eta^2p^{q-2}(F Dp)\cdot Dp\dd
&\le C_{\ds}K\int_{B_s}\eta^2p^{q-1}\tr Q\dd\\
&\quad+2\left|\int_{B_s}\eta p^{q-1}
(F Dp)\cdot D\eta\dd\right|.
\end{aligned}
\]
Young's inequality bounds the last term by
\[
\frac{q-1}{2}\int_{B_s}\eta^2p^{q-2}(F Dp)\cdot Dp\dd
+\frac{2}{q-1}\int_{B_s}p^q(F D\eta)\cdot D\eta\dd.
\]
We absorb the first term and use $D(p^{q/2})=(q/2)p^{q/2-1}Dp$. By \eqref{eq:volume-trace} and $p\ge1$, we obtain
\[
\int_{B_s}\eta^2F D(p^{q/2})\cdot D(p^{q/2})\dd
\le C_{n,\ds}\left(qK+\frac{1}{(s-r)^2}\right)
\int_{B_s}p^qV\dd.
\]
We used $q^2/(q-1)\le2q$ and $q^2/(q-1)^2\le4$ to retain the displayed dependence on $q$.

Set $w=p^{q/2}$. By \eqref{eq:conformal}, \eqref{eq:volume-trace}, and the preceding energy estimate,
\[
\begin{aligned}
\int_{\Gamma_u}|\nabla_g(\eta w)|^2\,d\mu
&\le 2\int_{B_s}\eta^2(F Dw)\cdot Dw\dd
+\frac{C_{n,\ds}}{(s-r)^2}\int_{B_s}p^qV\dd\\
&\le C_{n,\ds}\left(qK+\frac{1}{(s-r)^2}\right)
\int_{B_s}p^qV\dd.
\end{aligned}
\]
Apply \eqref{eq:MS} to $\eta w$. By \eqref{eq:conformal} and \eqref{eq:volume-trace}, the mean curvature integral is at most $C_{n,\ds}K^2\int_{B_s}p^qV\dd$. We conclude that
\begin{equation}\label{eq:iteration}
\left(\int_{B_r}p^{\gamma q}V\dd\right)^{1/\gamma}
\le C_{n,\ds}\left(qK+K^2+\frac{1}{(s-r)^2}\right)
\int_{B_s}p^qV\dd.
\end{equation}

Let $q_j=2\gamma^{j+1}$ and $r_j=1+2^{-j-1}$ for $j\ge0$. Apply \eqref{eq:iteration} with $q=q_j$, $r=r_{j+1}$, and $s=r_j$, and take the power $1/q_j$. The resulting multiplicative factors have a bounded product, since
\[
\sum_{j=0}^{\infty}
\frac{\log\!\left[C_{n,\ds}(q_jK+K^2+C4^j)\right]}{q_j}
\le C_{n,\ds}\sum_{j=0}^{\infty}
\frac{j+1+\log(1+K)}{q_j}<\infty.
\]
Iterating, we obtain
\[
\left(\int_{B_1}p^{q_j}V\dd\right)^{1/q_j}
\le C(n,K,\ds)
\left(\int_{B_{3/2}}p^{2\gamma}V\dd\right)^{1/(2\gamma)}
\quad\text{for all }j\ge0.
\]
The right side is bounded by \eqref{eq:initial-integral}. By \eqref{eq:volume}, the measure $V\dd$ is finite on $B_1$. Since $V\ge1$ and $p$ is continuous, we let $j\to\infty$ to obtain
\[
\sup_{B_1}p\le C(n,M,K,\ds).\qedhere
\]
\end{proof}

\section{Proof of the main theorem and interior regularity}

\begin{proof}[Proof of Theorem~\ref{thm:main}]
Fix $\ds\in(0,\pi/8)$ and let $\varphi=\theta-\Tc\ge0$. For $x_0\in B_1$, choose
\[
r_0=\frac{\ds}{32(1+K)}.
\]
Then $r_0<1/32$, $B_{4r_0}(x_0)\subset B_4$, and $Kr_0\le\ds/32$.

If $\varphi(x_0)\ge\ds/4$, then $\varphi\ge7\ds/32>\ds/8$ on $B_{r_0}(x_0)$. Since $\arctan\lambda_i>\varphi-\pi/2$ for every $i$, we have $D^2u\ge-\cot(\ds/8)I$ on this ball. By \eqref{eq:semiconvex-gradient},
\[
|Du(x_0)|
\le C_n\left(\frac{M}{r_0}+r_0\cot(\ds/8)\right)
\le C(n,M,K).
\]

If $\varphi(x_0)<\ds/4$, then on $B_{4r_0}(x_0)$ we have
\[
0\le\varphi
\le\varphi(x_0)+4Kr_0
<\frac{3\ds}{8}<\ds.
\]
Define $v(y)=r_0^{-2}u(x_0+r_0y)$ for $y\in B_4$. Its Hessian is $D^2v(y)=D^2u(x_0+r_0y)$, so $v$ solves \eqref{eq:phase} with prescribed phase $\theta(x_0+r_0y)$. The oscillation of $v$ on $B_4$ is at most $M/r_0^2$, and the Lipschitz seminorm of its phase is at most $r_0K$. Proposition~\ref{prop:near} therefore implies
\[
|Du(x_0)|=r_0|Dv(0)|
\le r_0\,C\left(n,\frac{M}{r_0^2},r_0K,\ds\right)
\le C(n,M,K).
\]
Both cases give a bound uniform in $x_0\in B_1$, proving \eqref{eq:main}.
\end{proof}

\begin{proof}[Proof of Corollary~\ref{cor:regularity}]
Write $\mathcal P(A)=\sum_i\arctan\lambda_i(A)$. We first explain the higher regularity estimates used below. On a bounded set of matrices satisfying $\mathcal P(A)\ge\Tc$, the operator $-e^{-b\mathcal P(A)}$ is concave for a sufficiently large $b$ depending only on the matrix bound and $n\ge3$; see \cite[Lemma 6.1]{BSWY}. Its matrix derivative is uniformly positive definite there. Taking the infimum of its tangent affine functions on the bounded convex set $\{A:-LI\le A\le LI,\ \mathcal P(A)\ge\Tc\}$ gives a concave, uniformly elliptic extension agreeing with the operator on that set. Consequently, uniform interior Hessian bounds and a uniform Lipschitz bound for the phase give uniform interior $C^{2,\beta}$ estimates for some $\beta\in(0,1)$ by Evans--Krylov theory. For smooth solutions $w$ with phase $\vartheta$, differentiating the equation gives
\[
 (I+(D^2w)^2)^{-1}_{ij}(w_k)_{ij}=\vartheta_k.
\]
The coefficients are now uniformly elliptic and uniformly $C^{0,\beta}$. Interior linear $W^{2,q}$ estimates for $w_k$, followed by Sobolev embedding with $q>n/(1-\alpha)$, give uniform interior $C^{2,\alpha}$ estimates for every $\alpha\in(0,1)$. Only the Lipschitz bound for $\vartheta$ is used on the right side.

Fix a ball $\Omega=B_r(x_0)\Subset B_4$. Choose smooth phases $\theta_j$ on $\overline\Omega$ such that
\[
 \Tc<\theta_{j+1}<\theta_j<\frac{n\pi}{2},\qquad
 \theta_j\longrightarrow\theta\ \text{uniformly},\qquad
 \sup_j\Lip_{\Omega}\theta_j<\infty.
\]
By \cite[Corollary 1.1]{BMS}, there are solutions $v_j\in C^\infty(\Omega)\cap C^0(\overline\Omega)$ satisfying
\[
 \mathcal P(D^2v_j)=\theta_j\quad\text{in }\Omega,
 \qquad v_j=u\quad\text{on }\partial\Omega.
\]
Here interior smoothness follows by elliptic bootstrapping from the smoothness of $\theta_j$. The strict inequality $\theta_j>\theta_{j+1}$ and the classical maximum principle give $v_j\le v_{j+1}$. Let $h$ be the harmonic function on $\Omega$ with boundary value $u$. Since the $v_j$ are subharmonic by $(n-1)$-convexity,
\[
 v_1\le v_j\le h\quad\text{on }\overline\Omega.
\]
In particular, their oscillations are uniformly bounded. Theorem~\ref{thm:main} and Corollary~\ref{cor:hessian}, applied on smaller balls after translation and scaling, give uniform interior gradient and Hessian bounds. The preceding regularity estimates and compactness show that the increasing limit $v$ belongs to $C^{2,\alpha}_{\mathrm{loc}}(\Omega)$ for every $\alpha\in(0,1)$ and solves $\mathcal P(D^2v)=\theta$. Since $v_1$ and $h$ are continuous on $\overline\Omega$ and both equal $u$ on $\partial\Omega$, the same bounds show that $v$ extends continuously to $\overline\Omega$ with boundary value $u$.

It remains to identify $v$ with the given viscosity solution. For $t>0$, set
\[
 v_t^-=v+t(|x-x_0|^2-r^2),\qquad
 v_t^+=v-t(|x-x_0|^2-r^2).
\]
Strict ellipticity of $\mathcal P$ gives
\[
 \mathcal P(D^2v_t^-)>\theta,\qquad
 \mathcal P(D^2v_t^+)<\theta\quad\text{in }\Omega.
\]
Both functions equal $u$ on $\partial\Omega$. A positive maximum of $v_t^--u$ would give a $C^2$ test function touching $u$ from below and contradict its viscosity supersolution inequality. Similarly, a positive maximum of $u-v_t^+$ contradicts its viscosity subsolution inequality. Thus $v_t^-\le u\le v_t^+$, and letting $t\downarrow0$ gives $u=v$. Since $\Omega\Subset B_4$ was arbitrary, the corollary follows.
\end{proof}

\bigskip

\noindent \textbf{Declaration on the use of AI: } We acknowledge the assistance of AI in verifying the symmetric polynomial formulae and the gradient Jacobi inequality, and for language polishing of the manuscript.
The authors have reviewed and verified all AI-assisted material and take full responsibility for the content of this paper.

\end{document}